\theoremstyle{plain}
\newtheorem{theorem}{Theorem}[section]
\newtheorem{lemma}[theorem]{Lemma}
\newtheorem{proposition}[theorem]{Proposition}
\theoremstyle{definition}
\newtheorem{definition}[theorem]{Definition}
\newtheorem{counter example}[theorem]{Counter Example}
\newtheorem{corollary}[theorem]{Corollary}
\newtheorem{example}[theorem]{Example}
\numberwithin{equation}{section}
\author[S. Bag]{Sagarmoy Bag}
\address{Department of Pure Mathematics, University of Calcutta, 35, Ballygunge Circular Road, Kolkata 700019, West Bengal, India}
\email{sagarmoy.bag01@gmail.com}
\author[R.C.Manna]{Ram Chandra Manna}
\address{Ramakrishna Mission Vidyamandir, Belur Math, Howrah-711202, West Bengal, India}
\email{mannaramchandra8@gmail.com}
\author[S. K. Patra]{Sourav Kanti Patra}
\address{Ramakrishna Mission Vidyamandir, Belur Math, Howrah-711202, West Bengal, India}
\email{souravkantipatra@gmail.com}
\subjclass[2010]{Primary 54A05; Secondary 54D65}
\begin{document}

\title[Almost separable spaces]{Almost separable spaces}



\thanks {}
\keywords{Almost dense set, Almost separable space, Pseudocompact space, functionally Hausdorff space, sequentially separabile space, Strongly sequentially separabile space}
\maketitle		
\section*{Abstract}
 We have defined almost separable space. We show that like separability, almost separability is $c$ productive and converse also true under some restrictions. We establish a Baire Category theorem like result in Hausdorff, Pseudocompacts spaces. We investigate few relationships among separability, almost separability, sequential separability, strongly sequential separability.

\section{Introduction}
Let $X$ be any topological space. $C(X)$ be the set of all real valued continuous functions on $X$. A subset $A$ of $X$ is called almost dense in $X$ if for any $f\in C(X)$ with $f(A)=\{0\}$, implies that $f(X)=\{0\}$. And a topological space $X$ is called almost separable if it has a countable almost dense subset. A dense
subset is always almost dense. In completely regular space, dense and almost dense sets are identical. But the converse is not true. We give an example of a non completely regular space in which dense and almost dense sets are same [ Example $\ref{0}$ ]. Theorem $\ref{7}$ shows that almost separability is $c$ productive and under some restrictions the converse is also true Theorem $\ref{8}$. In theorem 4.3, we established Baire Category like theorem. Finally we establish relationships among almost separability, sequentially separability and strongly sequentially separability.
\begin{definition}\cite{B2013}
 A space $X$ is called sequentially separable if there exist a countable set $D$ such that for every $ x\in X$ there exist a sequence from $D$ converging to $x$
\end{definition}

\begin{definition}\cite{B2013}
A space $X$ is called strongly sequentially separable if it is separable and every dense countable subspace is sequentially dense. A subset $D$ of $X$ is called sequentially dense if for every $x$ in $X$ there exists a sequence from $D$ converges to $x$.
\end{definition}

\section{Almost dense subsets}

\begin{definition}
A subset $A$ of a topological space $X$ is called almost dense if for any $f\in C(X)$ with $f(A)=\{0\}$, implies $f(X)=\{0\}$.
\end{definition}

\begin{theorem}
Dense subsets are always almost dense.
\end{theorem}

\begin{proof}
Obvious.
\end{proof}

\begin{theorem}
Let $X$ be a completely regular space, then almost dense set are dense in $X$.
\end{theorem}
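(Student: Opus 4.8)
The plan is to argue by contraposition: I will assume that $A$ is \emph{not} dense in $X$ and produce a continuous function witnessing that $A$ fails to be almost dense. This reduces the statement to a direct application of the separation property built into complete regularity.

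First I would pass to the closure $\cl(A)$. Since $A$ is not dense, $\cl(A) \neq X$, so I may fix a point $x \in X \setminus \cl(A)$. Because $\cl(A)$ is a closed set and $x$ lies in its complement, complete regularity of $X$ supplies a continuous function $f \colon X \to [0,1]$ with $f(x) = 1$ and $f\big(\cl(A)\big) = \{0\}$. (Depending on the convention one adopts for complete regularity, one gets $f(x)=0$ and $f|_{\cl(A)}=1$ instead; in that case I would compose with $t \mapsto 1-t$ to normalize to the form above, so the choice of convention is immaterial.)

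From here the contradiction is immediate. Since $A \subseteq \cl(A)$, the function $f$ satisfies $f(A) = \{0\}$, yet $f(x) = 1$ shows $f(X) \neq \{0\}$. This directly violates the defining property of an almost dense set, namely that $f(A)=\{0\}$ should force $f(X)=\{0\}$. Hence the assumption that $A$ is not dense is untenable, and every almost dense subset of a completely regular space is dense.

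I do not expect a genuine obstacle here: the entire content is the separation of a point from a closed set, which is exactly what complete regularity provides. The only points requiring mild care are ensuring that $f$ is taken to vanish on the whole closure $\cl(A)$ (not merely on $A$, though the two give the same conclusion since $A\subseteq \cl(A)$) and fixing the normalization of the separating function so that it genuinely takes a nonzero value at $x$. Combined with the already-established fact that dense sets are almost dense, this theorem yields the claimed coincidence of the two notions in the completely regular setting.
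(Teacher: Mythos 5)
Your proof is correct and follows essentially the same route as the paper: pick $x\in X\setminus\cl(A)$ and use complete regularity to separate $x$ from $\cl(A)$ by a continuous function, contradicting almost denseness. In fact your explicit care with the normalization (composing with $t\mapsto 1-t$ so that $f$ vanishes on $\cl(A)$ and is nonzero at $x$) quietly repairs a slip in the paper's own write-up, which takes $f(\cl(A))=1$ and then asserts $f(A)=\{0\}$.
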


\begin{proof}
Let $A$ be almost dense subset of $X$. If possible let $A$ be not dense in $X$ and $x_\circ\in X\setminus \overline{A}$ then there exists $f\in C(X)$ such that $f(x_\circ )=0$ and $f(\overline{A})=1$. Then $f(A)=0$ but $f(X)\neq \{0\}$. This implies that $A$ is not almost dense in $X$ which is a contradiction. Hence $A$ is dense in $X$.
\end{proof}

We now show that the converse of the above theorem is not true.

\begin{theorem}\label{1}
Let $Y$ be an open dense subset of a topological space $X$ (may or may not be a Completely regular space). If $Y$ has a base of cozero subsets of $X$. Then almost dense set in $X$ is dense in $X$. A set $A$ of $X$ is called cozero set if $A=X\setminus Z(f)$, for some $f\in C(X)$, where $Z(f)=\{x\in X: f(x)=0\}$.
\end{theorem}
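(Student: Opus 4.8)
The plan is to argue by contradiction, mirroring the structure of the preceding theorem for the completely regular case but replacing the use of complete regularity with the hypotheses on $Y$. Suppose $A$ is an almost dense subset of $X$ that fails to be dense. Then $U := X \setminus \overline{A}$ is a nonempty open set, and my goal is to manufacture a single $f \in C(X)$ witnessing the failure of almost density, namely one with $f(A) = \{0\}$ but $f(X) \neq \{0\}$.

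First I would use the density of $Y$ to relocate the problem inside $Y$: since $Y$ is dense in $X$ and $U$ is a nonempty open set, we have $Y \cap U \neq \emptyset$, so I can fix a point $y_0 \in Y \cap U$. Because $Y$ is open in $X$, the set $Y \cap U$ is open in $X$ and hence is an open neighbourhood of $y_0$ in the subspace $Y$. This is the step that lets the cozero-base hypothesis apply.

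Next I invoke the cozero base: since $Y$ has a base consisting of cozero subsets of $X$, there is a basic set $C = X \setminus Z(f)$, for some $f \in C(X)$, with $y_0 \in C \subseteq Y \cap U$. This $f$ is exactly the witness I want. From $C \subseteq U = X \setminus \overline{A}$ I get $C \cap A = \emptyset$, hence $A \subseteq Z(f)$ and $f(A) = \{0\}$; on the other hand $y_0 \in C = X \setminus Z(f)$ forces $f(y_0) \neq 0$, so $f(X) \neq \{0\}$. This contradicts the almost density of $A$, and therefore $A$ is dense in $X$.

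The set-theoretic manipulations are routine; the only point requiring care—and what I expect to be the main obstacle—is the correct reading of the phrase \emph{"$Y$ has a base of cozero subsets of $X$"} together with the verification that the openness of $Y$ allows me to treat $Y \cap U$ simultaneously as a neighbourhood in the subspace $Y$ (so that the base produces a basic set containing $y_0$) and as a subset of $X$ on which the extracted $f \in C(X)$ does the separating. Once that bookkeeping is in place, no appeal to complete regularity of $X$ itself is needed, which is precisely what makes the example after this theorem possible.
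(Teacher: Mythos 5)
Your proof is correct and rests on the same key mechanism as the paper's: using the cozero base of the open dense set $Y$ to extract a nonempty cozero set of $X$ disjoint from $A$, whose defining function then violates almost density. The paper organizes this in two stages (first showing $A\cap Y$ is dense in $Y$, then transferring via the density of $Y$ in $X$), whereas you run a single direct contradiction at a point of $X\setminus\overline{A}$, which is if anything slightly cleaner.
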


\begin{proof}
Let $A$ be almost dense subset of $X$. To show $A$ is dense in $X$. Suppose $A\cap Y=\emptyset$. Let $y\in Y$. Then there exist $f\in C(X)$ such that $y\in X\setminus Z(f)\subseteq Y$. Then $f(A)=\{0\}$ but $f(X)\neq \{0\}$, which is a contradiction as $A$ is almost dense in $X$. So, $A\cap Y\neq \phi$. Claim: $A\cap Y$ is dense in $Y$. If not there is a cozero set $X\setminus Z(g)\subset Y$ such that $(A\cap Y)\cap (X\setminus Z(g))=\empty$, where $g\in C(X)$. Then $Z(g)\subset (X\setminus Y)\cup (A\cap Y)\supset A$. So, that $g(A)=\{0\}$. But $g(X)\neq \{0\}$. Contradiction shows that $A\cap Y$ is d4ense in $Y$. Given that $Y$ is dense in $X$, $A\cap Y$ is dense in $Y$ which yields $A$ is dense in $X$.
\end{proof}

To find such a spaces we present following examples.

\begin{example}\label{0}
Let $K=\{\frac{1}{n}: n\in \mathbb{N}\}$ and $\beta =\{(a,b): a<b, a,b\in \mathbb{R}\}\cup \{ (a,b)\setminus K: a<b, a,b\in \mathbb{R}\}$ be a base for the topology $\tau_{K}$ on $\mathbb{R}$ which is known as $K$-topology on $\mathbb{R}$. Then $(\mathbb{R},\tau_K)$ is not regular and hence not completely regular. We would like to show that almost dense set in $(\mathbb{R},\tau_K)$ is dense in $(\mathbb{R},\tau_K)$. Let $Y=\mathbb{R}\setminus \{0\}$. The set $Y$ is open, dense in $(\mathbb{R},\tau_K)$ and the relative topology of $Y$ is usual topology of $Y$ which is completely regular and , as a result, possess a base consisting of cozero sets of $(\mathbb{R},\tau_K)$. We invoke theorem $\ref{1}$ to conclude that almost dense set in $(\mathbb{R},\tau_K)$ are dense in $(\mathbb{R},\tau_K)$.
\end{example}

\begin{example}
Let $X$ be zero-dimensional, non normal space. Let $F, G$ be two disjoint closed sets in $X$ which can not be separated by disjoint open sets. Then define $Y$ to be the quotient space obtaining by collapsing $G$ to a single point.
\end{example}

The following example shows that in a normal space the almost dense set and dense set may not be same.

\begin{example}
 Let $X=\{a,b\}$ and $\tau =\{\phi , X, \{a\}\}$ be a topology on $X$. Then $X$ is a normal space. $\{b\}$ is almost dense set in $X$ but not dense in $X$.
\end{example}

\begin{theorem}
Let $f:X\mapsto Y$ be a continuous onto function and $A$ be almost dense set in $X$, then $f(A)$ is almost dense set in $Y$.
\end{theorem}

\begin{proof}
Let $g\in C(Y)$ such that $g(f(A))=\{0\}$. Then $g\circ f\in C(X)$ such that $g\circ f (A)=\{0\}$. As $A$ is almost dense in $X$, then we have $g\circ f(X)=\{0\}$. Since $f$ is onto we have $g(Y)=\{0\}$. Hence $f(A)$ is almost dense in $Y$.
\end{proof}

\begin{theorem}
Let $\tau_1, \tau_2$ be two topologies on $X$ such that $\tau_2$ is finer than $\tau_1$. Then if a subset $A$ of $X$ is almost dense in $(X,\tau_2)$, then $A$ is almost dense in $(X,\tau_1)$.
\end{theorem}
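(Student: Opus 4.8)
The plan is to reduce almost density in the coarser topology $\tau_1$ to almost density in the finer topology $\tau_2$ by comparing the two collections of continuous real-valued functions. The crucial observation is that a finer topology admits at least as many continuous functions: if $f \colon (X,\tau_1) \to \mathbb{R}$ is continuous, then for every open $U \subseteq \mathbb{R}$ the preimage $f^{-1}(U)$ lies in $\tau_1$, and since $\tau_2$ is finer we have $\tau_1 \subseteq \tau_2$, so $f^{-1}(U) \in \tau_2$ as well; hence $f$ is also $\tau_2$-continuous. This gives the inclusion $C(X,\tau_1) \subseteq C(X,\tau_2)$.

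With this inclusion in hand, I would argue directly from the definition. Let $f \in C(X,\tau_1)$ be arbitrary with $f(A) = \{0\}$; the goal is to conclude $f(X) = \{0\}$. Since $f \in C(X,\tau_1) \subseteq C(X,\tau_2)$, the function $f$ is in particular a member of $C(X,\tau_2)$ that vanishes on $A$. Invoking the hypothesis that $A$ is almost dense in $(X,\tau_2)$ then forces $f(X) = \{0\}$. As $f$ was an arbitrary $\tau_1$-continuous function vanishing on $A$, this shows that $A$ is almost dense in $(X,\tau_1)$.

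The only point requiring care — and the place where one might slip — is the direction of the inclusion between $C(X,\tau_1)$ and $C(X,\tau_2)$. It is tempting to expect that enlarging the topology shrinks the supply of continuous functions, but the opposite holds: finer topologies have more open sets and therefore more continuous maps into $\mathbb{R}$. Once this direction is correctly pinned down, no further machinery (complete regularity, the earlier cozero-base results, and so on) is needed, and the proof is essentially a one-line application of the definition of almost density.
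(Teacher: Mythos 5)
Your proof is correct and follows exactly the same route as the paper: both rest on the inclusion $C(X,\tau_1)\subseteq C(X,\tau_2)$ and then apply the definition of almost density to an arbitrary $\tau_1$-continuous function vanishing on $A$. Your write-up merely spells out the justification for that inclusion (preimages of open sets land in $\tau_1\subseteq\tau_2$), which the paper leaves implicit.
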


\begin{proof}
We denote $C(X,\tau_i)$ for the set of all real valued continuous functions on $X$ with respect to the topology $\tau_i$ for $i=1,2$. Let $A$ be almost dense in $(X,\tau_2)$. Let $f\in C(X,\tau_1)$ with $f(A)=\{0\}$. Since $f\in C(X,\tau_2)$ as $\tau_2$ is finer than $\tau_1$. Since $A$ is almost dense in $(X,\tau_2)$, then $f(X)=\{0\}$. Hence $A$ is almost dense in $(X,\tau_1)$.
\end{proof}

\begin{theorem}
	If $A$ is almost dense in $X$ and $B$ is almost dense in $Y$, then $A\times B$ is almost dense in $X\times Y$.
\end{theorem}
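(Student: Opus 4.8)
The plan is to verify the defining property of almost density directly: take an arbitrary $h\in C(X\times Y)$ with $h(A\times B)=\{0\}$ and show that $h$ must vanish on all of $X\times Y$. The natural strategy is to ``peel off'' one coordinate at a time, applying the almost density hypothesis in each factor separately via slice functions. The point is that fixing one coordinate produces a continuous real-valued function on the remaining factor, to which the one-variable almost density property applies.

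First I would fix an arbitrary $a\in A$ and define the slice function $g_a\colon Y\to\mathbb{R}$ by $g_a(y)=h(a,y)$. This is continuous, since the map $y\mapsto(a,y)$ is a continuous map $Y\to X\times Y$ and $g_a$ is its composition with $h$. Because $h$ vanishes on $A\times B$, we have $g_a(B)=\{0\}$, and since $B$ is almost dense in $Y$, the defining property forces $g_a(Y)=\{0\}$. As $a\in A$ was arbitrary, this establishes the intermediate fact that $h(a,y)=0$ for every $a\in A$ and every $y\in Y$; that is, $h$ vanishes on all of $A\times Y$.

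Next I would fix an arbitrary $y\in Y$ and define $\varphi_y\colon X\to\mathbb{R}$ by $\varphi_y(x)=h(x,y)$, which is continuous for the same reason as above (it is the composition of $x\mapsto(x,y)$ with $h$). The intermediate fact just obtained says precisely that $\varphi_y(A)=\{0\}$, and since $A$ is almost dense in $X$, we conclude $\varphi_y(X)=\{0\}$. Because $y\in Y$ was arbitrary, this yields $h(x,y)=0$ for all $(x,y)\in X\times Y$, so $h(X\times Y)=\{0\}$, which is exactly what is required.

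I do not expect any genuine obstacle here; the argument is a clean two-step reduction to the one-variable definition. The only point demanding a moment of care is the continuity of the slice functions $g_a$ and $\varphi_y$, but this is routine once one observes that each is a composition of $h$ with a continuous inclusion of a factor into the product. The essential idea—that almost density, being defined purely through continuous functions and the closed condition $f=0$, behaves well under such slicing—is what makes the proof go through without invoking any completely regular or separation hypotheses on $X$ or $Y$.
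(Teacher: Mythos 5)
Your argument is correct and is essentially identical to the paper's proof: both fix $a\in A$ to show via almost density of $B$ that $h$ vanishes on $A\times Y$, then fix $y\in Y$ and use almost density of $A$ to conclude $h$ vanishes on all of $X\times Y$. No issues.
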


\begin{proof}
Let $f:X\times Y\mapsto \mathbb{R}$ be continuous such that $f=0$ on $A\times B$. Fix $a\in A$ and look at $f_a(y)=f(a,y), y\in Y$. $f_a: Y\mapsto \mathbb{R}$ is continuous and $f_a(y)=0$ for all $y\in B$. By hypothesis $f_a(y)=0$ for all $y\in Y$.

Since $a$ is arbitrary we obtain $f(A\times Y)=\{0\}$. Take any $(x,y)\in X\times Y$. then $f_y: X\mapsto \mathbb{R}$ defined by $f_y(z)=f(z,y), z\in X$ is a continuous map. Now $f_y(a)=0$ for all $a\in A$. Then $f_y(x)=0$ for all $x\in X$ i.e. $f(x,y)=0$. Since $y\in Y$ is arbitrary this shows $f=0$ on $X\times Y$. Hence $A\times B$ is almost dense in $X\times Y$.
\end{proof}	

\begin{corollary}\label{4}
Let $A_1, A_2,...,A_n$ be almost dense in $X_1,X_2,....,X_n$ respectively. Then $\prod_{i=1}^{n}A_i$ is almost dense in $\prod_{i=1}^{n}X_i$.
\end{corollary}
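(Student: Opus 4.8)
The plan is to prove this by induction on $n$, using the immediately preceding two-factor product theorem as the engine. For $n=1$ the statement is just the hypothesis that $A_1$ is almost dense in $X_1$, so the base case is trivial. I would then assume the result for $n-1$ factors, namely that $\prod_{i=1}^{n-1}A_i$ is almost dense in $\prod_{i=1}^{n-1}X_i$, and deduce it for $n$ factors.

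For the inductive step, the key observation is the canonical identification $\prod_{i=1}^{n}X_i \cong \left(\prod_{i=1}^{n-1}X_i\right)\times X_n$, under which the subset $\prod_{i=1}^{n}A_i$ corresponds to $\left(\prod_{i=1}^{n-1}A_i\right)\times A_n$. Setting $X=\prod_{i=1}^{n-1}X_i$ with $A=\prod_{i=1}^{n-1}A_i$, and $Y=X_n$ with $B=A_n$, the inductive hypothesis gives that $A$ is almost dense in $X$, while the hypothesis of the corollary gives that $B$ is almost dense in $Y$. Applying the two-factor theorem then yields that $A\times B$ is almost dense in $X\times Y$, which is exactly the desired conclusion once transported back along the identification.

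The only point that needs a word of justification, and the place I would be most careful, is that this canonical identification is a homeomorphism and that almost density is preserved under it. Since a homeomorphism together with its inverse furnishes continuous surjections in both directions, the earlier theorem asserting that continuous onto images of almost dense sets are almost dense shows that almost density transfers across the identification both ways. This closes the induction and establishes the corollary for every finite $n$.
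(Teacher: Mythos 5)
Your proof is correct and is essentially the argument the paper intends: the result is stated as a corollary of the two-factor product theorem, and the obvious induction you carry out (together with the routine transfer of almost density across the homeomorphism $\prod_{i=1}^{n}X_i\cong\bigl(\prod_{i=1}^{n-1}X_i\bigr)\times X_n$, justified by the continuous-image theorem) is exactly what is being left to the reader.
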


\begin{proposition}
Let $\{X_\alpha :\alpha\in \Lambda\}$ be a family of topological spaces and $A_\alpha\subset X_\alpha, \alpha\in \Lambda$ be almost dense. Then $\prod_{\alpha\in\Lambda}A_\alpha$ is almost dense in $\prod_{\alpha\in \Lambda}X_\alpha$.
\end{proposition}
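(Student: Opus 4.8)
The plan is to reduce the arbitrary product to the finite product case (Corollary \ref{4}) by approximating an arbitrary point of the product by a net of points that differ from a fixed base point of the product set in only finitely many coordinates. Write $X=\prod_{\alpha\in\Lambda}X_\alpha$ and $A=\prod_{\alpha\in\Lambda}A_\alpha$. If some $A_\alpha=\emptyset$ then $A=\emptyset$, and $\emptyset$ is vacuously almost dense, so I may assume every $A_\alpha\neq\emptyset$ and fix a point $a=(a_\alpha)\in A$. Let $f\in C(X)$ with $f(A)=\{0\}$; I must show that $f(x)=0$ for an arbitrary $x=(x_\alpha)\in X$.

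For each finite subset $F\subseteq\Lambda$ define the point $z_F\in X$ by $z_F(\alpha)=x_\alpha$ for $\alpha\in F$ and $z_F(\alpha)=a_\alpha$ for $\alpha\notin F$. The first key step is to show $f(z_F)=0$ for every finite $F$. To do this I would consider the section map $s_F:\prod_{\alpha\in F}X_\alpha\to X$ sending $(y_\alpha)_{\alpha\in F}$ to the point equal to $y_\alpha$ on $F$ and to $a_\alpha$ off $F$; each of its coordinate functions is either a projection or a constant, so $s_F$ is continuous, and hence $g_F:=f\circ s_F$ is continuous on $\prod_{\alpha\in F}X_\alpha$. Since $s_F$ carries any point of $\prod_{\alpha\in F}A_\alpha$ into $A$, the function $g_F$ vanishes on $\prod_{\alpha\in F}A_\alpha$. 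By Corollary \ref{4}, $\prod_{\alpha\in F}A_\alpha$ is almost dense in $\prod_{\alpha\in F}X_\alpha$, so $g_F\equiv 0$; evaluating at $(x_\alpha)_{\alpha\in F}$ gives $f(z_F)=0$.

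Next I would note that the net $(z_F)$, indexed by the finite subsets of $\Lambda$ directed by inclusion, converges to $x$ in the product topology: any basic neighbourhood of $x$ constrains only finitely many coordinates, say those in a finite set $F_0$, and for every $F\supseteq F_0$ the point $z_F$ agrees with $x$ on $F_0$ and hence lies in that neighbourhood. Continuity of $f$ then yields $f(x)=\lim_F f(z_F)=0$. As $x$ was arbitrary, $f(X)=\{0\}$, which proves that $A$ is almost dense in $X$.

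The main obstacle is that, unlike the finite case treated earlier, a continuous real-valued function on an infinite product need not depend on only finitely many coordinates, so the coordinate-by-coordinate peeling argument used for $A\times B$ cannot simply be iterated transfinitely. The device that overcomes this is to package the finite-coordinate information into a single net and to transfer the vanishing of $f$ from the finite subproducts, where Corollary \ref{4} applies, to the full product purely through continuity. Verifying the continuity of the section maps $s_F$ and the convergence of the net $(z_F)$ to $x$ are the two points that require care.
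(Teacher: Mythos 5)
Your proof is correct and follows essentially the same route as the paper: both arguments fix a base point $a\in\prod_\alpha A_\alpha$, use Corollary \ref{4} on the finite subproducts (via the section/slice homeomorphism) to show $f$ vanishes at every point differing from $a$ in only finitely many coordinates, and then conclude by continuity from the density of that set of finitely-supported modifications. Your net formulation is just an explicit unpacking of the density theorem the paper cites, and your handling of the case $A_\alpha=\emptyset$ is a minor tidiness improvement rather than a different argument.
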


\begin{proof}
For $a=(a_\alpha)_{\alpha\in\Lambda }\in \prod_{\alpha\in\Lambda}A_\alpha$. Define $D=\{(x_\alpha)_{\alpha\in\Lambda}\in \prod_{\alpha\in\Lambda}X_\alpha:\{\alpha :x_\alpha\neq a_\alpha\}$ is finite$\}$. Let $f:\prod_{\alpha\in\Lambda}X_\alpha\mapsto \mathbb{R}$ be continuous such that $f=0$ in $\prod_{\alpha\in \Lambda}A_\alpha$. For any finite subset $I\subset \Lambda, \prod_{\alpha\in I}X_\alpha\times \{a_\alpha:\alpha\in\Lambda\setminus I\}$ is homeomorphic to $\prod_{\alpha\in I}X_\alpha$ and $\prod_{\alpha\in I}A_\alpha$ is almost dense in $\prod_{\alpha\in I}X_\alpha$(by above Corollary). Hence $\prod_{\alpha\in I}A_\alpha\times\{a_\alpha:\alpha\in\Lambda\setminus I\}$ is almost dense in $\prod_{\alpha\in I}X_\alpha\times\{a_\alpha:\alpha\in\Lambda\setminus I\}$. Consequently $f(\prod_{\alpha\in\Lambda}X_\alpha\times\{a_\alpha:\alpha\in\Lambda\setminus I\})=\{0\}$. Now $D=\cup[\prod_{\alpha\in I}X_\alpha\times\{a_\alpha:\alpha\in\Lambda\setminus I\}:I\subset \Lambda$ finite $]$. This implies that $f(D)=\{0\}$. We now invoke the following important result in product space.
\end{proof}

 We now invoke the following important result on product space.
\begin{theorem}
Let $\{Y_\alpha:\alpha\in \Lambda\}$ be a family of topological spaces and and $a=(x_\alpha)_{\alpha\in\Lambda}$ be a fixed element of $\prod_{\alpha\in\Lambda}Y_\alpha$. The set $E=\{(y_\alpha)_{\alpha\in \Lambda}\in \prod_{\alpha\in\Lambda}Y_\alpha:\{\alpha\in\Lambda :y_\alpha\neq x_\alpha\}$ is finite$\}$ is dense in $\prod_{\alpha\in\Lambda}Y_\alpha$.
\end{theorem}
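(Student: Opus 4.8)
The plan is to verify density directly by showing that $E$ meets every nonempty basic open set of the product topology. Recall that the product topology on $\prod_{\alpha\in\Lambda}Y_\alpha$ has as a base the collection of sets $\prod_{\alpha\in\Lambda}U_\alpha$, where each $U_\alpha$ is open in $Y_\alpha$ and $U_\alpha=Y_\alpha$ for all but finitely many indices $\alpha$. Thus it suffices to fix such a basic open set $U=\prod_{\alpha\in\Lambda}U_\alpha$, assume it is nonempty, and produce a point of $E$ lying in $U$.

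Let $F=\{\alpha\in\Lambda: U_\alpha\neq Y_\alpha\}$, which is finite by the definition of the base. Since $U$ is nonempty, each factor $U_\alpha$ is nonempty, so for every $\alpha\in F$ I may choose some $y_\alpha\in U_\alpha$. For the remaining indices $\alpha\in\Lambda\setminus F$, I set $y_\alpha=x_\alpha$, the corresponding coordinate of the fixed point $a$. The resulting point $y=(y_\alpha)_{\alpha\in\Lambda}$ then lies in $U$: on the constrained coordinates $\alpha\in F$ we have $y_\alpha\in U_\alpha$ by construction, and on the remaining coordinates $U_\alpha=Y_\alpha$ imposes no restriction. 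Moreover $\{\alpha:y_\alpha\neq x_\alpha\}\subseteq F$ is finite, so $y\in E$. Hence $E\cap U\neq\emptyset$.

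Since every nonempty basic open set meets $E$, and such sets form a base for the topology, $E$ is dense in $\prod_{\alpha\in\Lambda}Y_\alpha$. The only point requiring care is the very definition of the product topology: the argument hinges entirely on the fact that a basic open set constrains only finitely many coordinates, which is precisely the feature that lets me complete the chosen finite-support values with the coordinates of $a$ and still remain in $E$. There is no genuine obstacle beyond invoking this structure correctly; had one used the box topology instead, the claim would fail, so the finiteness of $F$ is the essential ingredient.
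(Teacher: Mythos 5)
Your proof is correct. The paper states this theorem without proof, invoking it as a well-known fact about product spaces, so there is nothing to compare against; your argument — intersecting $E$ with an arbitrary nonempty basic open set by choosing points in the finitely many constrained factors and filling the remaining coordinates with those of $a$ — is the standard and complete proof of this result.
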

Because of this theorem $D$ is dense in $\prod_{\alpha\in\Lambda}X_\alpha$ and $f(D)=0$ which implies $f=0$ on $\prod_{\alpha\in \Lambda}X_\alpha$. As a result $\prod_{\alpha\in \Lambda}A_\alpha$ is almost dense in $\prod_{\alpha\in\Lambda}X_\alpha$.

\begin{theorem}\label2{}
If a topological space $X$ contains a connected almost dense subset, then $X$ is connected.
\end{theorem}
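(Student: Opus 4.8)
The plan is to argue by contraposition, converting the topological notion of disconnectedness into the functional language in which almost density is phrased. So I would begin by assuming $X$ is \emph{not} connected and aim to produce a single $f\in C(X)$ with $f(A)=\{0\}$ but $f(X)\neq\{0\}$, which directly contradicts the hypothesis that $A$ is almost dense.

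Concretely, disconnectedness of $X$ gives a separation $X=U\cup V$ with $U,V$ nonempty, open, and disjoint; since each is the complement of the other, both $U$ and $V$ are clopen. I would then define $f\colon X\to\mathbb{R}$ by setting $f\equiv 0$ on $U$ and $f\equiv 1$ on $V$. Because $U$ and $V$ are clopen, the preimage of every subset of $\mathbb{R}$ is open, so $f$ is continuous and hence $f\in C(X)$. The point of choosing a two-valued clopen-supported function is precisely that it detects the separation while landing in $C(X)$.

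Next I would bring in the connectedness of $A$. The sets $A\cap U$ and $A\cap V$ are relatively open in $A$, disjoint, and cover $A$; since $A$ is connected in the subspace topology, one of them must be empty. Without loss of generality assume $A\cap V=\emptyset$, so that $A\subseteq U$ and therefore $f(A)=\{0\}$. Almost density of $A$ then forces $f(X)=\{0\}$. But $V$ is nonempty, so $f$ attains the value $1$ on $X$, giving $f(X)\neq\{0\}$ — the desired contradiction. Hence no such separation of $X$ can exist, and $X$ is connected.

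I do not anticipate a genuine obstacle here: the argument is short and the only points needing care are verifying that the indicator-type function $f$ is continuous (which rests on $U,V$ being clopen, not merely open) and correctly invoking connectedness of $A$ to place all of $A$ in one piece of the separation. The conceptual crux is simply the observation that almost density is exactly strong enough to promote ``$f$ vanishes on a connected almost dense piece'' to ``$f$ vanishes everywhere,'' which is what rules out a disconnection.
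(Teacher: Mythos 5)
Your proposal is correct and follows essentially the same route as the paper: both pass from a separation of $X$ to a continuous two-valued function vanishing on the piece containing the connected set $A$, and then invoke almost density to derive a contradiction. The only cosmetic difference is that you handle the two possible locations of $A$ by a (legitimate) symmetry/WLOG argument, whereas the paper splits into two explicit cases using $f$ and $1-f$.
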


\begin{proof}
Let $A$ be connected almost dense subset of $X$. If possible let $X$ be disconnected. Then there exists a onto continuous function $f:X\mapsto \{0,1\}$. Let $Y=\{x\in X: f(x)=0\}$. Then $X\setminus Y=f^{-1}\{1\}$. Since $A$ is connected, then either $A\subseteq Y$ or $A\subseteq X\setminus Y$.

Case 1: If $A\subseteq Y$. Then $f(A)=\{0\}$ but $f(X)\neq \{0\}$, which contradicts the fact that $A$ is almost dense set in $X$.

Case 2: If $A\subseteq X\setminus Y$. Consider $g= 1-f$. Then $g(A)=\{0\}$ but $g(X)\neq \{0\}$, which is a contradiction as $A$ is almost dense set in $X$.\\

Hence $X$ is connected.
\end{proof}

Of course the converse of $\ref{2}$ is not true i.e., $X$ is a connected space but an almost dense subset need not be connected. As an illustration consider the following:

$X=\mathbb{R}$ and the topology consist of all subsets of $\mathbb{R}$ containing $0$. $X$ is obviously connected. The subspace $A=\{1,2\}$ is almost dense in $X$ but not connected.

\begin{theorem}\label{3}
A subset $A$ of $X$ is almost dense in $X$ if and only if every non empty cozero set intersects $A$.
\end{theorem}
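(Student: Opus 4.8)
The plan is to prove both implications by directly unpacking the definition of almost density in terms of zero sets, exploiting one elementary equivalence: for $f \in C(X)$, the cozero set $X \setminus Z(f)$ misses $A$ precisely when $A \subseteq Z(f)$, which is exactly the condition $f(A) = \{0\}$. Once this observation is in place, each direction of the biconditional is just a contrapositive reformulation of the definition of almost dense.

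For the forward implication, I would assume $A$ is almost dense and argue by contradiction. Suppose some nonempty cozero set $U = X \setminus Z(f)$ satisfies $U \cap A = \emptyset$. Then $A \subseteq Z(f)$, i.e.\ $f(A) = \{0\}$, while the nonemptiness of $U$ supplies a point at which $f$ does not vanish, so $f(X) \neq \{0\}$. This contradicts almost density, so every nonempty cozero set must meet $A$.

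For the converse, I would assume every nonempty cozero set meets $A$ and verify the defining property directly. Take any $f \in C(X)$ with $f(A) = \{0\}$, and suppose toward a contradiction that $f(x_0) \neq 0$ for some $x_0 \in X$. Then the cozero set $X \setminus Z(f)$ is nonempty (it contains $x_0$), so by hypothesis it contains some $a \in A$, giving $f(a) \neq 0$ and contradicting $f(A) = \{0\}$. Hence $f(X) = \{0\}$, and $A$ is almost dense.

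I do not expect a genuine obstacle here, since the statement is essentially a restatement of the definition through the cozero/zero-set dictionary. The only point requiring slight care is keeping the logical polarity straight: each direction is cleanest via its contrapositive, and one must keep in mind that a cozero set is by definition the complement of $Z(f)$ for some continuous $f$, so that ``$A$ avoids the cozero set'' and ``$f$ vanishes on $A$'' really are the same condition.
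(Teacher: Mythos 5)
Your proposal is correct and follows essentially the same route as the paper: both directions are handled by the same contrapositive translation between ``$A$ misses the cozero set $X\setminus Z(f)$'' and ``$f(A)=\{0\}$,'' with a contradiction drawn from the nonemptiness of the cozero set in one direction and from the hypothesis that $A$ meets every nonempty cozero set in the other. No gaps; this matches the paper's argument.
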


\begin{proof}
Let $A$ be an almost dense subset of $X$ and $ X-Z(f) $ be a non-empty cozero set in $X$. Let us assume that $ A\cap ( X-Z(f))=\emptyset$. Then $ f(A)=\{0\}$. Since $A$ is almost dense in $X$ , then  $ f(X)= \{0\}$ ,Which contradicts the fact that $ X-Z(f)$ is non-empty. So $ A\cap\ (X-Z(f))\neq\emptyset$.

Conversely, let $A$ intersect every non-empty cozero sets of $X$. Suppose $A$ is not almost dense subset of $X$. Then there exists a continuous function $ f:X\mapsto\mathbb{R}$ such that $ f(A)=\{0\}$ but $ f(X)\neq\{0\}$. Then $X-Z(f)$ is a non empty cozero set and $ A $ does not intersect the non-empty cozero set $ X-Z(f)$.
\end{proof}

As is well known that in a topological space a non-empty subset is dense if and only if it intersect every non-empty open set. Theorem $\ref{3}$ presents the analogous result in the case of almost denseness property. We have seen that in a topological space $X$ if $A\subset X$ is dense and $U$ is non-empty open set, then not only $A\cap U\neq \emptyset$, $A\cap U$ is dense in $U$ also. Now the question arises as follows:

Let $A$ be almost dense in a topological space $X$ and $\emptyset\neq U\subset X$ is a cozero set. Then $A\cap U\neq \emptyset$ is no doubt, but $A\cap U$ is almost dense in $U$? The answer does not seem to be clear!

\section{Almost separable spaces}

\begin{definition}
A space $X$ is called almost separable if it contains a countable almost dense subset.
\end{definition}

\begin{theorem}
Each separable space is almost separable.
\end{theorem}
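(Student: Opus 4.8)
The plan is to read off the result directly from the definitions together with the very first theorem of the previous section. Recall that $X$ is separable precisely when it admits a countable dense subset, and $X$ is almost separable precisely when it admits a countable almost dense subset. So the entire task reduces to producing, from a countable dense set, a countable almost dense set — and the natural candidate is the dense set itself.

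Concretely, I would start by assuming $X$ is separable and fixing a countable dense subset $D \subseteq X$. The one substantive input is the theorem already proved above, namely that every dense subset is almost dense. Applying that theorem to $D$ immediately shows $D$ is almost dense in $X$. Since $D$ is countable by the choice made in the separability hypothesis, $D$ is a countable almost dense subset, which is exactly what the definition of almost separability requires.

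I do not anticipate any genuine obstacle here: the argument is a one-line chain (separable $\Rightarrow$ countable dense set $\Rightarrow$ countable almost dense set $\Rightarrow$ almost separable), and every link is either a definition or the earlier theorem on dense-implies-almost-dense. The only thing to be careful about is to invoke that earlier theorem explicitly rather than re-deriving the containment, so that the proof stays as short as the statement deserves. In short, the proof should simply note that the witnessing dense set doubles as the witnessing almost dense set, and conclude.
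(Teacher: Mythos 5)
Your proposal is correct and is exactly the intended argument: the paper states this theorem without proof, evidently regarding it as an immediate consequence of the earlier theorem that dense subsets are almost dense, which is precisely the one-line chain you give. Nothing further is needed.
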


The converse of the above theorem is not true.

\begin{example}
Let $X=\mathbb{R}$ and $\tau_c$ be cocountable topology on $X$. For any countable set $A$ in $X$, $X\setminus A$ is open. Therefore $A$ is not dense in $X$. Therefore $X$ is not separable.

  We want to show that $\mathbb{Q}$ is almost dense in $X$. Let $f\in C(X)$ and $f (\mathbb{Q})=\{0\}$. Since each real valued continuous function on $X$ is constant, then $f(X)=\{0\}$. Therefore $\mathbb{Q}$ is almost dense in $X$. Therefore $X$ contains a countable almost dense subset. Hence $X$ is an almost separable space. In fact, any countable subset of $X$ is almost dense in $X$.
\end{example}

\begin{theorem}
Finite product of almost separable spaces is almost separable.
\end{theorem}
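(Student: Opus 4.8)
The plan is to reduce the statement to the combination of two facts already in hand: that a finite product of almost dense sets is almost dense (Corollary \ref{4}), and that a finite product of countable sets is again countable. Since almost separability is by definition the existence of a countable almost dense subset, these two ingredients should close the argument with essentially no additional work.

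Concretely, I would proceed as follows. Let $X_1,\dots,X_n$ be almost separable spaces. By definition, for each $i$ there is a countable set $A_i\subseteq X_i$ that is almost dense in $X_i$. First I would invoke Corollary \ref{4} to conclude that $\prod_{i=1}^{n}A_i$ is almost dense in $\prod_{i=1}^{n}X_i$. Next I would observe that $\prod_{i=1}^{n}A_i$ is a finite product of countable sets and is therefore countable. Combining these two observations, $\prod_{i=1}^{n}X_i$ contains a countable almost dense subset, namely $\prod_{i=1}^{n}A_i$, which is exactly the definition of almost separability for the product. If one prefers to avoid quoting Corollary \ref{4} directly, the same conclusion follows by induction on $n$ from the two-factor case, but the corollary makes the induction unnecessary.

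I do not expect a genuine obstacle here, since the hard part — the stability of almost denseness under finite products — is precisely what Corollary \ref{4} supplies. The only point that deserves care is the role of finiteness: countability of the product set $\prod_{i=1}^{n}A_i$ is what fails for arbitrary index sets, which is why the statement is restricted to finite products (the more delicate productivity up to $c$ many factors being handled separately, as announced in the introduction). I would therefore flag, at most in a closing remark, that the argument uses finiteness only to preserve countability of the witnessing set, and not in the almost-density step itself.
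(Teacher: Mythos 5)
Your argument is correct and is precisely the paper's proof: the authors simply cite Corollary \ref{4} for the almost-density of $\prod_{i=1}^{n}A_i$, with the countability of a finite product of countable sets left implicit. Your write-up just makes that one-line reduction explicit.
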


\begin{proof}
It follows from the Corollary $\ref{4}$.
\end{proof}

In the case of infinite products, like the case of separable spaces, the following result is true.

\begin{theorem}\label{7}
Let  $\{X_\alpha:\alpha\in \Lambda\}$ be a family of almost separable topological spaces, $\Lambda$, with card($\Lambda)= c$, Then $\prod_{\alpha\in \Lambda}X_\alpha$  with product topology is almost separable space.
\end{theorem}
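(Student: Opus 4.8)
The plan is to imitate the Hewitt--Marczewski--Pondiczery construction but to read ``dense'' through the cozero-set description of almost denseness in Theorem \ref{3}. For each $\alpha$ fix a countable almost dense set $D_\alpha=\{d_{\alpha,0},d_{\alpha,1},\dots\}$ in $X_\alpha$ (repeating entries so that the enumeration is infinite), put $a_\alpha:=d_{\alpha,0}$, and set $a=(a_\alpha)_{\alpha\in\Lambda}$. Since $\mathrm{card}(\Lambda)=c=\mathrm{card}(2^{\mathbb N})$, fix an injection $\alpha\mapsto\sigma_\alpha\in 2^{\mathbb N}$ and write $\sigma_\alpha\restriction m$ for the length-$m$ initial segment. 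For each $m\in\mathbb N$ and each $\phi\colon\{0,1\}^m\to\mathbb N$ define $p_{m,\phi}\in\prod_\alpha X_\alpha$ by $\pi_\alpha(p_{m,\phi})=d_{\alpha,\phi(\sigma_\alpha\restriction m)}$, and let $D=\{p_{m,\phi}:m\in\mathbb N,\ \phi\in\mathbb N^{\{0,1\}^m}\}$. As $\bigcup_m\mathbb N^{\{0,1\}^m}$ is a countable union of countable sets, $D$ is countable; the entire weight of the cardinality hypothesis is spent here, in embedding $\Lambda$ into $2^{\mathbb N}$.

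It then remains to prove that $D$ is almost dense, i.e.\ that every $f\in C(\prod_\alpha X_\alpha)$ with $f|_D=0$ vanishes identically. Suppose not, so $\{f\ne0\}$ is a nonempty cozero set. The finite-support set $E=\{x:\{\alpha:x_\alpha\ne a_\alpha\}\text{ is finite}\}$ is dense in the product by the density theorem for product spaces stated above, hence $E$ meets the open set $\{f\ne0\}$: there is $w\in E$ with $w_\alpha=a_\alpha$ off a finite set $F$ and $f(w)\ne0$. The slice $L_F=\{x:x_\beta=a_\beta\text{ for }\beta\notin F\}$ is homeomorphic to the finite product $\prod_{\alpha\in F}X_\alpha$, in which $\prod_{\alpha\in F}D_\alpha$ is almost dense by Corollary \ref{4}. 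Since $f|_{L_F}$ is continuous and nonzero at $w$, its cozero set is nonempty, so by Theorem \ref{3} it meets the almost dense set: there is $v\in L_F$ with $v_\alpha\in D_\alpha$ for $\alpha\in F$, $v_\alpha=a_\alpha$ for $\alpha\notin F$, and $f(v)\ne0$. Thus we have produced a finite-support witness $v\in\prod_\alpha D_\alpha$ at which $f$ does not vanish.

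The decisive step is to approximate $v$ by points of $D$. Writing $v_\alpha=d_{\alpha,j_\alpha}$ for $\alpha\in F$, choose $m$ so large that the segments $\{\sigma_\alpha\restriction m:\alpha\in F\}$ are pairwise distinct, and define $\phi_m$ on $\{0,1\}^m$ by $\phi_m(\sigma_\alpha\restriction m)=j_\alpha$ for $\alpha\in F$ and $\phi_m\equiv0$ elsewhere. Then $\pi_\alpha(p_{m,\phi_m})=v_\alpha$ for every $\alpha\in F$, while for $\alpha\notin F$ one has $\pi_\alpha(p_{m,\phi_m})=d_{\alpha,0}=a_\alpha=v_\alpha$ as soon as $\sigma_\alpha\restriction m$ avoids the finitely many segments $\sigma_{\alpha'}\restriction m$ with $\alpha'\in F$, which holds for all sufficiently large $m$ because the $\sigma$'s are distinct. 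Hence $\pi_\alpha(p_{m,\phi_m})\to v_\alpha$ for every coordinate $\alpha$, i.e.\ $p_{m,\phi_m}\to v$ in the product topology. As $f$ is continuous and $f(v)\ne0$, we get $f(p_{m,\phi_m})\ne0$ for all large $m$, contradicting $f|_D=0$. Therefore $D$ is almost dense and $\prod_\alpha X_\alpha$ is almost separable.

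I expect the main obstacle to be exactly this last maneuver, together with the reason it is needed: because the factors are only almost separable, $D$ is \emph{not} dense, and cozero sets of an infinite product need not be boxes, so one cannot simply drop a point of $D$ coordinatewise into a prescribed cozero set. The two-stage remedy---first locating a finite-support witness inside $\{f\ne0\}$ via the density of $E$ and Corollary \ref{4}, then steering the combinatorial points $p_{m,\phi}$ onto that witness---is what carries the argument; every other step is routine. The one technical point to verify carefully is the passage to infinite enumerations $D_\alpha$ (arranged by repetition) so that each index $\phi(\sigma_\alpha\restriction m)$ is legitimate and $d_{\alpha,0}=a_\alpha$ throughout.
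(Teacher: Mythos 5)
Your proof is correct, but it takes a genuinely different route from the paper's. The paper does not redo the Hewitt--Marczewski--Pondiczery combinatorics at all: it fixes bijections $f_\alpha:\mathbb{N}\to A_\alpha$, observes that the induced map $f^\Lambda:\mathbb{N}^\Lambda\to\prod_\alpha A_\alpha$ is continuous and onto, quotes the classical fact that $\mathbb{N}^\Lambda$ is separable for $\mathrm{card}(\Lambda)=c$ (hence almost separable), and then chains together three of its own lemmas: continuous images of almost dense sets are almost dense (so $\prod_\alpha A_\alpha$ is almost separable), the Proposition that $\prod_\alpha A_\alpha$ is almost dense in $\prod_\alpha X_\alpha$, and the transitivity result that a space with an almost dense, almost separable subspace is almost separable. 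You instead unwind the HMP construction explicitly --- encoding $\Lambda$ into $2^{\mathbb{N}}$, building the countable family $p_{m,\phi}$, and verifying almost density directly via the density of the finite-support set $E$, Corollary \ref{4}, Theorem \ref{3}, and a coordinatewise convergence argument. Your two-stage verification (first locating a finite-support witness $v$ of $f\neq 0$ inside $\prod_\alpha D_\alpha$, then showing $p_{m,\phi_m}\to v$ so that continuity of $f$ forces $f(p_{m,\phi_m})\neq 0$ for large $m$) is sound, and is essentially a self-contained reproof of the HMP theorem adapted to almost denseness. What the paper's route buys is brevity and reuse of its general lemmas, at the cost of importing the separability of $\mathbb{N}^c$ as a black box; what your route buys is a fully explicit countable almost dense set and a transparent accounting of where the hypothesis $\mathrm{card}(\Lambda)=c$ is used (only to inject $\Lambda$ into $2^{\mathbb{N}}$, so both arguments in fact cover $\mathrm{card}(\Lambda)\leq c$).
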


\begin{proof}
Let $ A_\alpha\subset X_\alpha$ be a countable almost dense subset of $X_\alpha$.
To avoid triviality we assume $A_\alpha$ to be countably infinite. Let $f_{\alpha}:\mathbb{N}\mapsto A_{\alpha}$ be a bijection. Define $\prod_{\alpha \in \Lambda}f_\alpha:\mathbb{N}^{\Lambda}\mapsto \prod_{\alpha \in \Lambda}A_\alpha$ as follows: $\prod_{\alpha\in\Lambda}f_\alpha{(n_\alpha,\alpha\in\Lambda)}=(f_\alpha(n_\alpha),\alpha\in\Lambda)$
where $(n_\alpha,\alpha\in\Lambda)\in\mathbb{N}^{\Lambda}$. We write $f^{\Lambda}$ for $\prod_{\alpha\in\Lambda}f_{\alpha}$. We know $\mathbb{N}^{\Lambda}=\{g:\Lambda\mapsto\mathbb{N}\}$ and $ f^{\Lambda}(g)= (f_\alpha(g(\alpha)): \alpha\in \Lambda)\in \prod_{\alpha\in\Lambda}A_\alpha$. It is easy to see that $f^{\Lambda}$ is onto. Let $ p_{\alpha}:\prod_{\alpha\in\Lambda}A_{\alpha}\mapsto A_{\alpha}$ be the projection to $\alpha$-th coordinate. Then $p_{\alpha}\circ f^{\Lambda}(g)=f_\alpha\circ g(\alpha),\alpha\in \Lambda$.
This shows that $ f^{\Lambda}:\mathbb{N}^{\Lambda}\mapsto\prod_{\alpha\in\Lambda}A_{\alpha}$ is continuous and onto. It is well known that $\mathbb{N}^{\Lambda}$ is separable and, hence almost separable. Therefore $\prod_{\alpha\in\Lambda}A_{\alpha}$  is almost separable. Since $\prod_{\alpha\in\Lambda}A_{\alpha}$  is almost dense in $\prod_{\alpha\in\Lambda}X_{\alpha}$ ,
$\prod_{\alpha\in\Lambda}X_{\alpha}$ is almost separable.
\end{proof}

\begin{definition}
A topological space $X$ is called functionally Hausdorff if for any two distinct points $a, b\in X$ there exists $f\in C(X)$ such that $f(a)=0$ and $f(b)=1$.
\end{definition}

\begin{lemma}\label{5}
If $X$ a functionally Hausdorff space, then for any two distinct points $a, b\in X$ there exist two distinct cozero sets $C, D$ in $X$ such that $a\in C$ and $b\in D$.
\end{lemma}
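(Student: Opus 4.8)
The plan is to extract a single continuous function from the functionally Hausdorff hypothesis and to build both required cozero sets out of it. Given distinct points $a,b\in X$, the definition of functional Hausdorffness supplies an $f\in C(X)$ with $f(a)=0$ and $f(b)=1$. From this one function I expect to read off both neighbourhoods, exploiting the fact that $Z(f)$ and $Z(f-1)$ are disjoint.

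First I would set $D=X\setminus Z(f)=\{x\in X: f(x)\neq 0\}$, which is a cozero set by definition; since $f(b)=1\neq 0$ we have $b\in D$, while $f(a)=0$ forces $a\notin D$. To produce a cozero set around $a$ I would pass to $g=f-1\in C(X)$, so that $g(a)=-1$ and $g(b)=0$, and set $C=X\setminus Z(g)=\{x\in X: f(x)\neq 1\}$. Again $C$ is a cozero set, and $a\in C$ because $f(a)=0\neq 1$, whereas $f(b)=1$ gives $b\notin C$.

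It then remains only to check that $C$ and $D$ are genuinely distinct, and this is immediate from the containments just established: $a\in C\setminus D$ (and symmetrically $b\in D\setminus C$), so $C\neq D$, which is exactly what the statement asks.

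Since everything reduces to the elementary observation that the separating function $f$ has $a\in Z(f)$ and $b\in Z(f-1)$ with these two zero sets disjoint, I do not anticipate a real obstacle; the only point needing minor care is confirming that the level-avoidance sets $\{f\neq 0\}$ and $\{f\neq 1\}$ are honestly of the cozero form $X\setminus Z(\cdot)$, which they are by construction. If a stronger conclusion were ever wanted, the same $f$ in fact yields \emph{disjoint} cozero sets, via $C'=\{x: (\tfrac12-f(x))^+\neq 0\}$ and $D'=\{x:(f(x)-\tfrac12)^+\neq 0\}$, but the stated distinctness requires nothing beyond the two zero sets above.
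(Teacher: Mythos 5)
Your argument is correct for the lemma as literally stated, but it takes a different and strictly weaker route than the paper. You build $C=\{x: f(x)\neq 1\}$ and $D=\{x:f(x)\neq 0\}$ directly as complements of the zero sets of $f-1$ and $f$; these are indeed cozero sets, they contain $a$ and $b$ respectively, and they are distinct because $a\in C\setminus D$. The paper instead takes $C=\{x:f(x)<\tfrac12\}$ and $D=\{x:f(x)>\tfrac12\}$, realized as cozero sets via the lattice operations $(f-\underline{\tfrac12})\wedge\underline{0}$ and $(f-\underline{\tfrac12})\vee\underline{0}$, and thereby obtains \emph{disjoint} cozero sets. This difference matters downstream: in the proof of Theorem \ref{8} the authors invoke this lemma to produce disjoint $C_\alpha, D_\alpha$, and the injectivity of $\phi$ there (the step ``$x\in K_\alpha$ and $x\notin K_\beta$'') genuinely uses $C_\beta\cap D_\beta=\emptyset$, not mere distinctness. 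Your sets $\{f\neq 0\}$ and $\{f\neq 1\}$ typically overlap, so they would not suffice for that application. You do flag exactly the right fix in your closing remark --- the truncations $\{(\tfrac12-f)^+\neq 0\}$ and $\{(f-\tfrac12)^+\neq 0\}$ --- which is essentially the paper's construction; if the lemma is to serve its intended purpose, that stronger version should be the main statement of the proof rather than an aside.
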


\begin{proof}
$X$ is functionally Hausdorff. Then for $a, b\in X$ with $a\neq b$ there exists $f\in C(X)$ such that $f(a)=0$ and $f(b)=1$. Let $C=\{x\in X: f(x)<\frac{1}{2}\}=((f-\underline{\frac{1}{2}})\wedge \underline{0})^{-1}(\mathbb{R}\setminus\{0\})$ and $D=\{x\in X: f(x)>\frac{1}{2}\}=((f-\underline{\frac{1}{2}})\vee \underline{0})^{-1}(\mathbb{R}\setminus\{0\})$, where $(g\vee h)(x)=\max \{g(x), h(x)\}$ and  $(g\wedge h)(x)=\min \{g(x), h(x)\}$ for all $x\in X$. Thus $C, D$ are disjoint cozero sets in $X$ and $a\in C, b\in D$. This completes the proof.
\end{proof}

\begin{theorem}\label{8}
Let $X=\prod_{\alpha\in\Lambda}X_\alpha$ be almost separable space, where each $X_\alpha$ is functionally Hausdorff and contains at least two points. Then each $X_\alpha$ is almost separable and $card (\Lambda)\leq c$.
\end{theorem}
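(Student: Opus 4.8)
The plan is to prove the two assertions separately, reducing each to results already established in the excerpt. For the first assertion I would fix $\alpha\in\Lambda$ and exploit the projection $p_\alpha:\prod_{\beta\in\Lambda}X_\beta\to X_\alpha$. Since every factor is nonempty (each has at least two points), $p_\alpha$ is continuous and surjective. Let $\mathcal{D}$ be a countable almost dense subset of $X$, supplied by almost separability. Applying the earlier theorem that the continuous onto image of an almost dense set is almost dense, $p_\alpha(\mathcal{D})$ is almost dense in $X_\alpha$; it is also countable, so $X_\alpha$ is almost separable. This half is essentially immediate.

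For the cardinality bound I would mimic the classical Hewitt--Marczewski--Pondiczery argument, replacing open sets by cozero sets and topological density by the cozero characterization of Theorem \ref{3}. First, for each $\alpha$ choose two distinct points of $X_\alpha$; functional Hausdorffness together with Lemma \ref{5} supplies disjoint cozero sets $C_\alpha,E_\alpha\subseteq X_\alpha$ separating them. The two facts I would record are that $p_\alpha^{-1}(C_\alpha)$ is a cozero set of $X$ (if $C_\alpha=X_\alpha\setminus Z(f)$ then $p_\alpha^{-1}(C_\alpha)=X\setminus Z(f\circ p_\alpha)$, and $f\circ p_\alpha\in C(X)$) and that a finite intersection of cozero sets is again a cozero set, since $Z(f)\cup Z(g)=Z(fg)$.

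Then I would set $\mathcal{D}_\alpha:=\mathcal{D}\cap p_\alpha^{-1}(C_\alpha)$ and show the assignment $\alpha\mapsto\mathcal{D}_\alpha$ is injective into $\mathcal{P}(\mathcal{D})$. Given $\alpha\neq\beta$, the set $W=p_\alpha^{-1}(C_\alpha)\cap p_\beta^{-1}(E_\beta)$ is a nonempty cozero set: nonemptiness follows by choosing a point of $X$ whose $\alpha$-coordinate lies in $C_\alpha$, whose $\beta$-coordinate lies in $E_\beta$, and whose remaining coordinates are arbitrary (using that all factors are nonempty). By Theorem \ref{3}, $\mathcal{D}$ meets $W$, yielding a point $d\in\mathcal{D}_\alpha$ with $p_\beta(d)\in E_\beta$; since $C_\beta\cap E_\beta=\emptyset$ this forces $d\notin p_\beta^{-1}(C_\beta)$, hence $d\notin\mathcal{D}_\beta$, so $\mathcal{D}_\alpha\neq\mathcal{D}_\beta$. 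An injection of $\Lambda$ into $\mathcal{P}(\mathcal{D})$ with $\mathcal{D}$ countable then gives $\mathrm{card}(\Lambda)\le 2^{\aleph_0}=c$.

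The cozero-set bookkeeping is routine. The one step deserving care, and the crux of the whole argument, is the injectivity of $\alpha\mapsto\mathcal{D}_\alpha$: this is precisely where functional Hausdorffness (delivering the disjoint cozero pair through Lemma \ref{5}) and the cozero characterization of almost density (Theorem \ref{3}) must be combined, the former guaranteeing the separation that distinguishes $\mathcal{D}_\alpha$ from $\mathcal{D}_\beta$ and the latter guaranteeing that $\mathcal{D}$ actually witnesses this separation. Everything after that is standard cardinal arithmetic.
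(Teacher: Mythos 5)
Your proposal is correct and follows essentially the same route as the paper: projections handle the almost separability of each factor, and the injection $\alpha\mapsto\mathcal{D}\cap p_\alpha^{-1}(C_\alpha)$ into $\mathcal{P}(\mathcal{D})$, established via Lemma \ref{5} and the cozero characterization of Theorem \ref{3}, gives $\mathrm{card}(\Lambda)\leq c$. Your write-up is in fact slightly more careful than the paper's, which contains a typo ($p_\alpha^{-1}(D_\beta)$ where $p_\beta^{-1}(D_\beta)$ is meant) and omits the justification that the relevant intersection is a nonempty cozero set.
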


\begin{proof}
Let $D$ be a countable ,almost dense subset of $X$. Consider the projection function $ p_{\alpha}:X\mapsto X_{\alpha}$ to the $\alpha$-th coordinate. Then the function $ p_{\alpha}:X\mapsto X_{\alpha}$ is continuous. Since continuous image of an almost separable space is almost separable, therefore each $X_{\alpha}$ is almost separable.

For every $\alpha\in\Lambda$, let $ a_{\alpha}, b_{\alpha}\in X_{\alpha}$ with $ a_{\alpha}\neq b_{\alpha}$. Since each $ X_{\alpha}$ is functionally Hausdorff, therefore there exist disjoint cozero sets $C_\alpha, D_\alpha$ in $X_\alpha$ such that $a_\alpha\in C_\alpha$ and $b_\alpha\in D_\alpha$. $p_\alpha^{-1}(C_\alpha)$ is a non-empty cozero set in $X$. By Theorem $\ref3{}$ $K_\alpha=D\cap p_\alpha^{-1}(C_\alpha)\neq \emptyset$.

Define a function $\phi:\Lambda\mapsto\mathcal{P}(D)$ by $\phi(\alpha)=K_\alpha$. Now $p_\alpha^{-1}(C_\alpha)\cap p_\alpha^{-1}(D_\beta)$ is a non-empty cozero set in $X$. Then there exist $x\in D\cap p_\alpha^{-1}(C_\alpha)\cap p_\alpha^{-1}(D_\beta)$ by Theorem $\ref{3}$. So, $x\in K_\alpha$ and $x\notin K_\beta$. Therefore $K_\alpha\neq K_\beta$. Thus $\phi$ is injective. Hence $card (\Lambda)\leq card (\mathcal{P}(D))=c$. This completes the proof.
\end{proof}

\begin{theorem}\label{6}
For an almost separable space $X$, the cardinality of $C(X)$ is less than or equal to $c$.
\end{theorem}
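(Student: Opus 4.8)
The plan is to exploit the defining property of an almost dense set to show that a continuous function is completely determined by its restriction to such a set. Since $X$ is almost separable, fix a \emph{countable} almost dense subset $D\subseteq X$. The central observation is that the restriction map
\[
\rho\colon C(X)\longrightarrow \mathbb{R}^{D},\qquad \rho(f)=f|_{D},
\]
is injective. Indeed, suppose $f,g\in C(X)$ satisfy $\rho(f)=\rho(g)$, i.e.\ $f$ and $g$ agree at every point of $D$. Then $h:=f-g$ is again continuous on $X$ and satisfies $h(D)=\{0\}$. Because $D$ is almost dense, the definition of almost denseness forces $h(X)=\{0\}$, that is, $f=g$ on all of $X$. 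Hence $\rho$ is one-to-one.

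With injectivity in hand, the cardinality bound is immediate from a cardinal arithmetic computation. Since $D$ is countable we have $|D|\le\aleph_0$, so
\[
|C(X)|\;\le\;\bigl|\mathbb{R}^{D}\bigr|\;\le\;c^{\aleph_0}\;=\;\bigl(2^{\aleph_0}\bigr)^{\aleph_0}\;=\;2^{\aleph_0}\;=\;c,
\]
which is exactly the asserted inequality. (If $D$ happens to be finite the bound is only stronger.)

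There is essentially no genuine obstacle in this argument; the entire content is the injectivity step, and that follows in one line by applying the almost-dense hypothesis to the difference $f-g$ rather than to $f$ and $g$ separately. This is the natural analogue of the classical fact that a continuous real-valued function on a space with a dense subset is determined by its values on that subset, and it is worth stressing in the write-up that almost denseness is precisely the property that makes the map $\rho$ injective. The only point requiring a word of care is the cardinal computation $c^{\aleph_0}=c$, which I would either cite as standard or justify briefly via $c^{\aleph_0}=(2^{\aleph_0})^{\aleph_0}=2^{\aleph_0\cdot\aleph_0}=2^{\aleph_0}$.
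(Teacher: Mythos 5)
Your proof is correct and follows essentially the same route as the paper: restrict to a countable almost dense set, use the difference $f-g$ together with almost denseness to get injectivity of the restriction map, and then bound the cardinality by $c^{\aleph_0}=c$. The only cosmetic difference is that you map into $\mathbb{R}^{D}$ rather than $C(D)$ as the paper does, which if anything is slightly cleaner since it sidesteps any discussion of $|C(D)|$.
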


\begin{proof}
Let $A$ be countable almost dense subset of $X$. Define a map $\phi :C(X)\mapsto C(A)$ by $\phi (f)=f\lvert_A$. We sow that $\phi$ is injective mapping. Let $\phi (f)=\phi (g)$, where $f,g\in C(X)$. Then $f\lvert_A=g\lvert_A$. Let $h=f-g$. Then $h\in C(X)$ and $h(A)=\{0\}$. Since $A$ is almost dense in $X$, then $h(X)=\{0\}$. Therefore $\phi$ is injective. Since the cardinality of $C(A)$ is less that or equal to $c$, therefore the cardinality of $C(X)$ is less than or equal to $c$.
\end{proof}

\begin{corollary}
If an almost separable space has a uncountable closed discrete subspace, then it is not normal.
\end{corollary}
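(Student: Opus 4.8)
The plan is to run a Jones'-lemma style counting argument whose key input is Theorem \ref{6}. Assume $X$ is almost separable and carries an uncountable closed discrete subspace $S$, and suppose, toward a contradiction, that $X$ is normal. Since $S$ is discrete in its relative topology, every subset $A\subseteq S$ is closed in $S$; and as $S$ is closed in $X$, each such $A$ is closed in $X$ as well. Hence, for every $A\subseteq S$, the sets $A$ and $S\setminus A$ constitute a pair of disjoint closed subsets of $X$.

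I would then exploit normality through Urysohn's lemma: for each $A\subseteq S$ there exists $f_A\in C(X)$ with $f_A\equiv 0$ on $A$ and $f_A\equiv 1$ on $S\setminus A$. Define $\Phi:\mathcal{P}(S)\mapsto C(X)$ by $\Phi(A)=f_A$. This map is injective, for if $A\neq B$ we may choose $s$ in their symmetric difference, say $s\in A\setminus B$; then $f_A(s)=0$ while $f_B(s)=1$, since $s\in S\setminus B$, so that $f_A\neq f_B$. Therefore $2^{|S|}=|\mathcal{P}(S)|\leq |C(X)|$. Alternatively, one could bypass Theorem \ref{6} and instead separate $A$ from $S\setminus A$ by disjoint cozero sets $U_A,V_A$, then send $A$ to $D\cap U_A$ for a countable almost dense $D$; injectivity would then follow because $U_A\cap V_B$ is a nonempty cozero set and so meets $D$ by Theorem \ref{3}.

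Finally I would invoke Theorem \ref{6}, giving $|C(X)|\leq c$, and combine it with the previous inequality to obtain $2^{|S|}\leq c$. This cardinal estimate is where the substance of the corollary resides, and turning it into a contradiction with the uncountability of $S$ is the step I expect to be the main obstacle. The contradiction is clean once $|S|\geq c$: then $2^{|S|}\geq 2^{c}>c$ by Cantor's theorem, contradicting $2^{|S|}\leq c$. Under the Continuum Hypothesis an uncountable $S$ automatically satisfies $|S|\geq\aleph_1=c$, so the statement holds as written; in $\mathrm{ZFC}$ alone the word ``uncountable'' should be read as ``of cardinality at least $c$'', since $2^{\aleph_1}\leq 2^{\aleph_0}$ is consistent and would otherwise leave the final step inconclusive.
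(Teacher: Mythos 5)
The paper states this corollary without proof, placing it immediately after Theorem \ref{6}; the intended argument is plainly the Jones-lemma count you carry out, and your execution of it is correct: normality of $X$ plus the fact that every subset of a closed discrete $S$ is closed in $X$ lets Urysohn's lemma inject $\mathcal{P}(S)$ into $C(X)$, and Theorem \ref{6} caps $|C(X)|$ at $c$. The set-theoretic caveat you raise at the end is genuine, and it is the only real issue --- an issue with the statement rather than with your argument. From $2^{|S|}\leq c$ one extracts a contradiction in ZFC only when $|S|\geq c$, so that $2^{|S|}\geq 2^{c}>c$ by Cantor's theorem; for $|S|=\aleph_1$ the inequality $2^{\aleph_1}\leq 2^{\aleph_0}$ is consistent with ZFC (it holds, for instance, under $\mathrm{MA}+\neg\mathrm{CH}$), so the corollary as literally stated requires $2^{\aleph_0}<2^{\aleph_1}$ --- e.g.\ CH --- as an extra hypothesis, or else ``uncountable'' should be strengthened to ``of cardinality at least $c$'' (which still covers the Niemytzki plane). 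This is precisely the phenomenon of Tall's paper cited in the bibliography. Your alternative route through disjoint cozero sets and Theorem \ref{3} is a fine variant that bypasses Theorem \ref{6}, but it changes nothing about the cardinal arithmetic at the final step.
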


\begin{theorem}
Let $X$ be functionally Hausdorff, almost separable space. Then the cardinality of $X$ is atmost $2^{c}$.
\end{theorem}

\begin{proof}
Consider the function $\psi:X\mapsto\mathcal{P}(C(X))$ by $\psi(x)= \{f\in C(X): f(x)= 0\}$. Use the functionally Hausdorff property of $X$ to conclude that $\psi$ is injective. By the Theorem $\ref{6}$, cardinality of $C(X)$ is $c$. Thus cardinality of $X$ is atmost $2^{c}$.

Alternative proof of the above theorem: Let $X$ be a functionally Hausdorff, almost separable space. Let $\tau_w$ be the weak topology on $X$ induced by $C(X)$. Then $(X,\tau_w)$ is completely regular Hausdorff space show that almost dense set becomes dense. Thus $(X,\tau_w)$ is a separable Hausdorff space. It is known that the cardinality of a separable Hausdorff space is atmost $2^{c}$.  	
\end{proof}

\begin{theorem}
Let $Y$ be almost dense in $X$ and $Y$ be almost separable as a subspace. Then $X$ is almost separable.
\end{theorem}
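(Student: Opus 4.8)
The plan is to exhibit a single countable set that witnesses almost separability of $X$, namely a countable almost dense subset of the subspace $Y$. Since $Y$ is almost separable as a subspace, I can fix a countable set $D\subseteq Y$ which is almost dense in $Y$ with respect to the subspace topology. My goal is then to verify that this same $D$ is almost dense in $X$; as $D$ is countable, this immediately yields that $X$ is almost separable.

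To check that $D$ is almost dense in $X$, I take an arbitrary $f\in C(X)$ satisfying $f(D)=\{0\}$ and aim to conclude $f(X)=\{0\}$. The idea is to pass through $Y$ in two steps. First, the restriction $f|_Y$ is continuous on $Y$ (being the restriction of a continuous map to a subspace), so $f|_Y\in C(Y)$, and it vanishes on $D$ because $D\subseteq Y$ and $f(D)=\{0\}$. Since $D$ is almost dense in $Y$, the definition of almost denseness forces $f|_Y(Y)=\{0\}$, that is, $f(Y)=\{0\}$. Second, I invoke the hypothesis that $Y$ is almost dense in $X$: here $f\in C(X)$ vanishes on $Y$, so the definition of almost denseness in $X$ gives $f(X)=\{0\}$. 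This is exactly what was required, so $D$ is a countable almost dense subset of $X$.

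The argument is essentially a transitivity statement for almost density, chained through the intermediate set $Y$. The only point requiring care — and the only place an error could slip in — is the interface between the two notions: \emph{almost dense in $Y$} is defined relative to $C(Y)$ with the subspace topology, whereas \emph{almost dense in $X$} is relative to $C(X)$. The bridge is the elementary fact that the restriction map $f\mapsto f|_Y$ carries $C(X)$ into $C(Y)$, which is what lets the hypothesis on $D$ inside $Y$ be applied to $f|_Y$. I do not expect any genuine obstacle here; alternatively, one could run the same argument through the cozero characterization of Theorem \ref{3}, checking that every nonempty cozero set of $X$ meets $D$, but the direct definitional route above seems cleanest.
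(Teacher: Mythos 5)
Your proposal is correct and follows essentially the same route as the paper: fix a countable almost dense $D\subseteq Y$, take $f\in C(X)$ vanishing on $D$, restrict to $Y$ to conclude $f(Y)=\{0\}$ via almost density of $D$ in $Y$, then use almost density of $Y$ in $X$ to get $f(X)=\{0\}$. Your explicit remark about the interface between $C(Y)$ and $C(X)$ via the restriction map is a welcome clarification of the one point the paper's proof passes over silently.
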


\begin{proof}
Let $A$ be countable almost dense subset of $Y$. Let $f\in C(X)$ such that $f(A)=\{0\}$. Then $f\lvert_Y\in C(Y)$ and $f\lvert_Y(A)=\{0\}$. This implies that $f\lvert_Y(Y)=\{0\}$ as $A$ is almost dense in $Y$. Since $Y$ is almost dense in $X$, then $f(X)=\{0\}$. Therefore $A$ is countable almost dense subset of $X$. Hence $X$ is almost separable space.
\end{proof}
\section{Baire Category Like Theorem}
\begin{theorem}
For a topological space $X$ the following are equivalent:\\$(i) X$ is pseudocompact\\$(ii)$ If  $\{F_{n}: n\in \mathbb{N}\}$ is a sequence of zero sets of $X$with finite intersection property , Then $\bigcap_{n=1}^{\infty}F_{n}\neq \emptyset$.\\$(iii)$ If $\{U_{n}: n\in \mathbb{N}\}$ is a countable cover of $X$ consisting of cozero sets , there exists a finite subcover.
\end{theorem}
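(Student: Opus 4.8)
The plan is to prove the cycle of implications by first noticing that (ii) and (iii) are literal duals of one another under complementation, and then closing the loop through pseudocompactness by two constructions: an easy one turning an unbounded function into a bad decreasing sequence of zero sets, and a harder one turning a bad cozero cover into an unbounded function. Concretely I aim to establish $(i)\Rightarrow(iii)\Leftrightarrow(ii)\Rightarrow(i)$.

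First I would dispose of $(ii)\Leftrightarrow(iii)$, which requires nothing beyond the definitions of zero and cozero sets. Given a countable cozero cover $\{U_n\}$ I set $F_n=X\setminus U_n$, a zero set; the covering condition $\bigcup_n U_n=X$ is exactly $\bigcap_n F_n=\emptyset$, and ``$\{U_n\}$ has no finite subcover'' is exactly ``$\{F_n\}$ has the finite intersection property''. Reading this correspondence in both directions shows that (ii) and (iii) are contrapositive restatements of each other.

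Next, for $(ii)\Rightarrow(i)$ I would argue by contraposition. If $X$ is not pseudocompact, choose an unbounded $h\in C(X)$ and put $f=|h|\ge 0$. The sets $F_n=\{x:f(x)\ge n\}=Z\big((\underline{n}-f)\vee\underline{0}\big)$ are zero sets, decreasing in $n$, and each is nonempty because $f$ is unbounded; hence $\{F_n\}$ has the finite intersection property. But $\bigcap_n F_n=\{x:f(x)\ge n \text{ for all } n\}=\emptyset$, so (ii) fails.

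The main work, and the step I expect to be the real obstacle, is $(i)\Rightarrow(iii)$, again by contraposition: from a countable cozero cover $\{U_n\}$ with no finite subcover I must manufacture an unbounded continuous function. I would first replace $\{U_n\}$ by its increasing partial unions $V_n=U_1\cup\cdots\cup U_n$, which are still cozero, satisfy $V_n\ne X$ for every $n$ (otherwise there is a finite subcover), and have $\bigcup_n V_n=X$. Writing $V_n=X\setminus Z(g_n)$ with $0\le g_n\le 1$, the naive sum $\sum_n g_n$ need not converge, so instead I would form $f=\sum_{n=1}^{\infty}2^{-n}g_n$, which is continuous by the Weierstrass test. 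Since every point lies in some $V_N$, $f$ is strictly positive everywhere, so $1/f\in C(X)$. The crux is showing $\inf_X f=0$, so that $1/f$ is unbounded: for $x\notin V_n$ the monotonicity of the $V_n$ forces $g_1(x)=\cdots=g_n(x)=0$, whence $f(x)\le\sum_{k>n}2^{-k}=2^{-n}$, and since each $X\setminus V_n$ is nonempty this produces points where $f$ is arbitrarily small. Thus $1/f$ is an unbounded member of $C(X)$ and $X$ is not pseudocompact. I expect the only delicate point to be the bookkeeping guaranteeing $g_k(x)=0$ on $X\setminus V_n$ for $k\le n$, which is precisely where the monotonicity of the $V_n$ is essential; combining the three parts then closes the equivalence.
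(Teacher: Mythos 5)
The paper itself offers no proof of this theorem: it is quoted as a known characterization of pseudocompactness (it is essentially Gillman--Jerison), so there is no argument of the authors' to compare yours against. Your proposal is a correct and complete proof. The cycle $(i)\Rightarrow(iii)\Leftrightarrow(ii)\Rightarrow(i)$ closes: the complementation dictionary between zero sets with the finite intersection property and cozero covers without finite subcovers is exact; the sets $F_n=Z\big((\underline{n}-f)\vee\underline{0}\big)$ are indeed nonempty, decreasing zero sets with empty total intersection when $f=|h|$ is unbounded; and in the main step the passage to the increasing partial unions $V_n$ is precisely what guarantees $g_k(x)=0$ for $k\le n$ whenever $x\notin V_n$, so that $f=\sum 2^{-n}g_n$ is continuous, everywhere positive, and has infimum $0$, making $1/f$ continuous and unbounded. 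This is the standard argument, correctly executed.
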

\begin{theorem}
Let $X$ be a Hausdorff space. Given a non-empty cozero-set $U$ and $x\in U$ there exists a cozero-set $V$ and a zero-set $F$ such that $x\in V\subset F\subset U$.
\begin{proof}
Let $f:X\longrightarrow[0,1]$ such that $U=f^{-1}(0,1]=X-f^{-1}(\{0\}).$ Since $x\in U, f(x)>0$. Choose $\delta >0$ such that $0<f(x)-\delta <f(x)$. Let $V=f^{-1}(f(x)-\delta ,1]), F=f^{-1}[f(x)-\delta ,1]$. Then $x\in V\subset \subset F \subset U$. Now $(f(x)-\delta, 1]\subset [0,1] $ is an open set and hence a cozero-set and $[f(x)-\delta ,1]$ is a closed subset of $[0,1]$ and hence a zero-set. Then $V=f^{-1}(f(x)-\delta,1]$ is a cozero-set and $F=f^{-1}[f(x)-\delta,1]$ is a zero-set of $X$.	
\end{proof}	
\end{theorem}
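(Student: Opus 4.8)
The plan is to construct explicitly the cozero set $V$ and zero set $F$ sandwiched between $x$ and $U$, using the defining continuous function of the cozero set $U$. First I would write $U = X \setminus Z(h)$ for some $h \in C(X)$, and since $x \in U$ we have $h(x) \neq 0$. Replacing $h$ by $h^2$ (or $|h|$) if necessary, I may assume $h \geq 0$ and $h(x) > 0$; I would then normalize by composing with a bounded continuous function so as to work with a map into $[0,1]$, exactly as the statement suggests by taking $f \colon X \to [0,1]$ with $U = f^{-1}((0,1])$.

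The key step is the choice of a threshold. Having $f(x) > 0$, I would pick $\delta > 0$ small enough that $0 < f(x) - \delta$, and set
\[
V = f^{-1}\bigl((f(x)-\delta,\,1]\bigr), \qquad F = f^{-1}\bigl([f(x)-\delta,\,1]\bigr).
\]
The containment $x \in V \subset F \subset U$ is then immediate from the inequalities $0 < f(x)-\delta < f(x) \leq 1$: indeed $f(x) > f(x)-\delta$ gives $x \in V$, the inclusion $(f(x)-\delta,1] \subset [f(x)-\delta,1]$ gives $V \subset F$, and $f(x)-\delta > 0$ forces $F \subset f^{-1}((0,1]) = U$.

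It remains to verify that $V$ is a cozero set and $F$ is a zero set of $X$. For this I would note that the half-open interval $(f(x)-\delta,1]$ is open in $[0,1]$, hence a cozero subset of $[0,1]$, and that $[f(x)-\delta,1]$ is closed in $[0,1]$, hence a zero subset of $[0,1]$; pulling these back along the continuous map $f$ preserves the cozero and zero set structure, since the preimage of a cozero (respectively zero) set under a continuous map is again a cozero (respectively zero) set. Concretely, if $(f(x)-\delta,1] = [0,1]\setminus Z(\varphi)$ and $[f(x)-\delta,1] = Z(\psi)$ for suitable $\varphi,\psi \in C([0,1])$, then $V = X \setminus Z(\varphi \circ f)$ and $F = Z(\psi \circ f)$ exhibit the required forms.

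I do not expect a genuine obstacle here; the result is essentially a normality-type separation inside a single cozero set, and the Hausdorff hypothesis plays no essential role beyond what is already packaged into having $f$ continuous. The only point requiring a little care is the bookkeeping that preimages of zero and cozero sets under continuous maps remain zero and cozero sets, which is a standard fact and can be recorded in one line.
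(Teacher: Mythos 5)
Your proposal is correct and follows essentially the same route as the paper's own proof: normalize to $f\colon X\to[0,1]$ with $U=f^{-1}((0,1])$, choose the threshold $f(x)-\delta>0$, and pull back the open interval $(f(x)-\delta,1]$ and the closed interval $[f(x)-\delta,1]$, using that preimages of cozero (resp.\ zero) sets under continuous maps are cozero (resp.\ zero). Your added observation that the Hausdorff hypothesis is not actually used is accurate and worth noting.
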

\begin{theorem}({Baire Category Like Theorem})
Let $X$ be a Hausdorff, pseudocompact space. If $\{U_{n}:n\in \mathbb{N}\}$ is a sequence of almost dense cozero-sets of $X$, then $\bigcap _{n=1}^{\infty}U_{n}$ is a non-empty almost dense subset of $X$.
\begin{proof}
Write $D=\bigcap _{n=1}^{\infty}U_{n}$. To show $D\neq \emptyset $ and $D$ intersects every non-empty cozero-set. Let $V$ be a non-empty cozero-set and let $x\in V$. Since $U_{1}$ is almost dense , $V\cap U_{1}\neq \emptyset $ and is a cozero-set . Let $x_{1}\in V\cap U_{1}$. By theorem $(4.2)$, $ \exists $  cozero-set $V_{1}$ and a zero-set $F_{1}$ such that $x_{1}\in V_{1}\subset F_{1} \subset V\cap U_{1}$. Now $V_{1}\neq \emptyset $ cozero-set $\Longrightarrow  V_{1}\cap U_{2}\neq \emptyset $ and is a zero-set. Let $x_{2}\in V_{1}\cap U_{2}. \exists V_{2}$ a non empty co-zero set and $F_{2}$ , a zero-set , such that $x_{2}\in V_{2}\subset F_{2} \subset V_{1}\cap U_{2}\subset V\cap U_{1}\cap U_{2}$. Now $V_{2}\neq\emptyset$ cozero-set, $V_{2}\cap U_{3}\neq \emptyset$ and is a cozero -set. Let $x_{3}\in V_{2}\cap U_{3}$. $\exists V_{3}$, a cozero-set, and $F_{3}$ , a zero set , such that $x_{3}\in V_{3}\subset F_{3}\subset V_{2}\cap U_{3}\subset V\cap U_{1}\cap U_{2}\cap U_{3}$. Proceeding in this way we obtain non-empty cozero-set $V_{n+1}$, zero-set $F_{n+1}$ such that $x_{n+1}\in V_{n+1}\subset F_{n+1}\subset V_{n}\cap U_{n+1}\subset V\cap U_{1}\cap U_{2} \cap U_{3}\cap ......\cap U_{n+1}$ for all $n\geq 0$. Note that $F_{n+1}\subset F_{n}$ and $F_{n}'$ s are non-empty zero-sets. Since $X$ is pseudocompact , in virtue of the theorem $(4.1)$, $\bigcap _{n=1}^{\infty}F_{n}\neq \emptyset$.  		
Hence $\bigcap _{n=1}^{\infty}F_{n}\subset \bigcap _{n=1}^{\infty}(V\cap U_{1}\cap U_{2} \cap U_{3}\cap .......\cap U_{n})= V\bigcap_{n=1}^{\infty}U_{n}$ so that $V\cap (\bigcap_{n=1}^{\infty}U_{n})\neq \emptyset$. i.e, $V\cap D\neq \emptyset$. Since $V$ is an arbitary non-empty cozero-set , $D$ is almost dense.	
\end{proof}	
\end{theorem}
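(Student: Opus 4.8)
The plan is to prove this ``Baire category like'' statement by adapting the classical nested-closed-set argument, but replacing the metric completeness ingredient with the pseudocompactness characterization in Theorem~(4.1)(ii): a countable family of zero-sets with the finite intersection property has non-empty intersection. Since almost density is characterized by meeting every non-empty cozero-set (Theorem~\ref{3}), it suffices to show that $D=\bigcap_{n=1}^\infty U_n$ meets an arbitrary non-empty cozero-set $V$; establishing this for all such $V$ simultaneously gives both that $D\neq\emptyset$ (take any $V$) and that $D$ is almost dense.

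First I would fix a non-empty cozero-set $V$ and build, by induction, a decreasing chain of non-empty cozero-sets $V_n$ sandwiched inside zero-sets $F_n$. The key tool is Theorem~(4.2): in a Hausdorff space, given a non-empty cozero-set and a point in it, one can interpolate a cozero-set $V'$ and a zero-set $F'$ with $x\in V'\subset F'\subset(\text{the given cozero-set})$. At the $n$-th stage I would use that $U_n$ is almost dense and $V_{n-1}$ is a non-empty cozero-set, so by Theorem~\ref{3} the intersection $V_{n-1}\cap U_n$ is non-empty; it is again a cozero-set since a finite intersection of cozero-sets is a cozero-set. Picking a point $x_n$ in it and applying Theorem~(4.2) yields $x_n\in V_n\subset F_n\subset V_{n-1}\cap U_n$. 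Unwinding the nesting gives $F_n\subset V\cap U_1\cap\cdots\cap U_n$, and crucially $F_{n+1}\subset V_n\subset F_n$, so the $F_n$ form a decreasing sequence of non-empty zero-sets.

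The decisive step is then the application of pseudocompactness: since the $F_n$ are non-empty and nested, they automatically have the finite intersection property, so Theorem~(4.1) forces $\bigcap_{n=1}^\infty F_n\neq\emptyset$. Because each $F_n$ sits inside $V\cap U_1\cap\cdots\cap U_n$, this common point lies in $V\cap\bigcap_{n=1}^\infty U_n=V\cap D$, which is therefore non-empty. As $V$ was an arbitrary non-empty cozero-set, Theorem~\ref{3} yields that $D$ is almost dense (and in particular non-empty).

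The main obstacle I anticipate is purely in the bookkeeping of the interpolation step: one must be careful that each $V_{n-1}\cap U_n$ really is a non-empty cozero-set before invoking Theorem~(4.2), which requires both the almost-density of $U_n$ (to guarantee non-emptiness) and the stability of cozero-sets under finite intersection (to guarantee it is again a cozero-set so that Theorem~(4.2) applies). Everything else is a mechanical verification of the nesting inclusions; the only genuinely non-trivial input is that pseudocompactness converts the nested family of zero-sets into a non-empty intersection, and that is handed to us by Theorem~(4.1).
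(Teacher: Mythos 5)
Your proposal is correct and follows essentially the same route as the paper's own proof: reduce almost density of $D$ to meeting every non-empty cozero-set via Theorem~\ref{3}, build the nested chain $V_n\subset F_n\subset V_{n-1}\cap U_n$ using the interpolation result (Theorem 4.2), and apply the pseudocompactness characterization (Theorem 4.1) to the decreasing non-empty zero-sets $F_n$. No substantive differences to report.
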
		 	
We know that separability is not a hereditary property. Niemytzky's plane is a well known example. Same is true about almost separability as well. Niemytzky's plane provides an example in this case also.

We now finish our paper giving relations among the different types of separability notions which are defined earlier.

Strong sequentially separable $\Rightarrow$ Sequentially separable $\Rightarrow$ Separable $\Rightarrow$ Almost separable.

\textbf{Acknowledgments:} The authors would like to thank to Professor Alan Dow for Theorem 2.4. Also the authors wish to thank Professor Asit Baran Raha for their valuable suggestions that improved the article.

\bibliographystyle{plain}

\begin{thebibliography}{6}

	\bibitem{B2013}
	A. Bella, M. Bonanzinga, M. Matveev,
	Sequential+ sparable vs. sequntially separable and another variation on sective separability, Cent. Eur. J. Math., {\textbf 11} (2013), 530--538.

    \bibitem{T1974}
	F. D. Tall,
	How separable is a space? That depends on your set theory!, Proc. Amer. Math. Society, {\textbf 46} (1974), 310--314.

	\bibitem{W1972}
	A. Wilansky,
	How separable is a space?, Amer. Math. Monthly, {\textbf 79} (1972), 764--765.
\end{thebibliography}

\end{document}